\newtheorem{theorem}{Theorem}[section]
\newtheorem{Counter-example}[theorem]{Counter example}
\newtheorem{Claim}[theorem]{Claim}
\newtheorem{Lemma}[theorem]{Lemma}
\newtheorem{Proposition}[theorem]{Proposition}
\newtheorem{Definition}[theorem]{Definition}
\newtheorem*{theorem*}{Theorem}
\DeclareMathOperator*{\cmpct}{cpct}
\DeclareMathOperator*{\diag}{diag}
\newcommand{\diam}{\text{diam}}
\providecommand{\customgenericname}{}
\newcommand{\newcustomtheorem}[2]{%
  \newenvironment{#1}[1]
  {%
   \renewcommand\customgenericname{#2}%
   \renewcommand\theinnercustomgeneric{##1}%
   \innercustomgeneric
  }
  {\endinnercustomgeneric}
}
\title{Large slices through self affine carpets}
\author{Amir Algom and Meng Wu}
\date{}
\begin{document}
\maketitle
\abstract{Let $F\subseteq [0,1]^2$ be a Bedford-McMullen carpet defined by exponents $m>n$, that projects to $[0,1]$ on the $y$-axis. We show that under mild conditions on $F$,  there are many non principle lines $\ell$ such that $\dim^* F\cap \ell = \dim^* F -1$, where $\dim^*$ is Furstenberg's star dimension (maximal dimension of a microset). This exhibits the sharpness of recent Furstenberg-type slicing theorems obtained by Algom (2020) about  upper bounds on the dimension of every such slice. 
 }

\section{Introduction} \label{introduction}
Let $F\subset \mathbb{R}^2$ be a set and let $\ell \subset \mathbb{R}^2$ be an affine line. In this paper we consider the classical question of estimating the dimension of $F\cap \ell$ in terms of the dimension of $F$. For $(u,t)\in  \mathbb{R} \times \mathbb{R}$ let $\ell_{u,t}$ denote the planar line with slope $u$ that intersects the $y$-axis at $t$ (notice that we exclude from notation lines that are parallel to the $y$-axis). By Marstrand's slicing Theorem, for any fixed slope $u$,
\begin{equation} \label{Eq. Marstrand}
\dim_H F\cap \ell_{u,t} \leq \max \lbrace \dim_H F -1, 0 \rbrace \text{ for Lebesgue almost every } t, 
\end{equation}
where $\dim_H F$ denotes the Hausdorff dimension of the set $F$. This is known to fail for any smaller value on the right hand side of  \eqref{Eq. Marstrand}.

While \eqref{Eq. Marstrand} predicts the dimension of the intersection of $F$ with a typical line $\ell$, it is  a  challenging problem to understand the intersection of $F$ with a fixed line $\ell$. Nonetheless,  in recent years there has been significant progress towards finding sharper versions of \eqref{Eq. Marstrand} when the underlying set $F$ has some arithmetic or dynamical origin. 

We will focus our attention on one such class of sets,  Bedford-McMullen carpets. These carpets are defined as follows: let $m> n>1$, and let 
\begin{equation*}
D \subseteq \lbrace 0,...,m-1 \rbrace \times \lbrace 0,...,n-1 \rbrace.
\end{equation*}
We then define
\begin{equation*}
F = \left\{ \left(\sum_{k=1} ^\infty \frac{x_k}{m^k}, \sum_{k=1} ^\infty \frac{y_k}{n^k}\right) :\quad  (x_k,y_k) \in D \right\}.
\end{equation*}
The set $F$ is  called a Bedford-McMullen carpet with defining exponents $m,n$, and allowed digit set $D$.

Recently it  has been shown that when $\frac{\log m}{\log n}\notin \mathbb{Q}$  these carpets  satisfy strong versions of Marstrand's  slicing Theorem, that hold for \textit{all} lines not parallel to the major axes. To state these results, let us recall some notions.  For a set $X \subseteq [0,1]^d$ we denote by  $\dim^* X$ its star dimension, 
\begin{equation} \label{Eq star dim}
\dim^* X:= \sup \lbrace \dim_H M: \quad M  \text{ is a microset of } X \rbrace.
\end{equation}
Recall that microsets of $X$ are limits in the Hausdorff metric on subsets of $[-1,1]^2$ of "blow-up" of increasingly small balls about points in $X$ (for a formal definition of a microset see Section \ref{Section microsets}). An explicit formula  for the star-dimension of $F$ in terms of $D,m,n$ was given by Mackay  \cite{mackay2011assouad}. This notion was originally introduced and stuided by Furstenberg in \cite{furstenberg2008ergodic}; In our setting it coincides with the notion of Assouad dimension \cite{fraser2020book}. However,  for consistency with the recent literature on the subject of slicing theorems we work here with star-dimension. 
\begin{customthm}{$A$}  \cite{algom2018slicing} \label{Theorem known}
Let $F$ be a Bedford-McMullen carpet with exponents $(m,n)$. If $\frac{\log m}{\log n} \notin \mathbb{Q}$ then for every $u\neq 0$ and $t\in \mathbb{R}$
\begin{equation} \label{Eq LHS}
\dim^* (\ell_{u,t} \cap F) \leq   \max \lbrace \dim^* F -1,0 \rbrace.
\end{equation}
\end{customthm}

The first version of Theorem \ref{Theorem known} was  proved simultaneously and independently by Shmerkin \cite{shmerkin2016furstenberg} and Wu \cite{wu2016proof} when $F$ is a product set. This  result led directly to the resolution of Furstenberg's slicing Conjecture \cite{furstenberg1970intersections}.  It extended previous work due to Furstenberg himself  \cite{furstenberg1970intersections}, Wolff \cite{Wolff1999Kakeya}, and Feng, Huang, and Rao \cite{feng2014affine}. A simple proof of the Conjecture was later found by Austin \cite{austin2020new}, and some improvments were given by Yu \cite{Yu2021Wu}.   Theorem \ref{Theorem known} in full generality was obtained later by Algom  \cite{algom2018slicing}, by extending Wu's method \cite{wu2016proof}.  We also note that B\'{a}r\'{a}ny,   K\"{a}enm\"{a}ki, and Yu \cite{Barany2021finer}, recently obtained similar results about slices through some non-carpet planar self-affine sets.

The main goal of this paper is to study the sharpness of Theorem \ref{Theorem known}. It is well known that Theorem \ref{Theorem known} is sharp when $F$ is Ahlfors regular, regardless of any arithmetic assumptions on $m,n$; Indeed this follows by combining the standard facts that here  $\dim^* F = \dim_H F$, that for every compact set $\dim_H X \leq \dim^* X$, and that if $\dim_H F>1$ there are slices through $F$ whose Hausdorff dimension approaches $\dim_H F-1$ \cite{Mattila2015Fourier}. In this work we will exhibit a large class of carpets that are not Ahlfors regular where Theorem \ref{Theorem known} remains {\em sharp}.

We emphasize that our focus here is on the sharpness of \eqref{Eq LHS} \textit{as stated}: When considering in \eqref{Eq LHS} other notions of dimension for the both the slice and the carpet then \eqref{Eq LHS} is no longer sharp. Indeed, in \cite{Algom2021Wu}, we showed that  if $\frac{\log m}{\log n} \notin \mathbb{Q}$ then  for all $u\neq 0$ and $t\in \mathbb{R}$,
\begin{enumerate}
\item $ \dim_H (\ell_{u,t} \cap F) \leq \max \left\lbrace 0, \, \frac{\dim_H F}{\dim^* F} \cdot (\dim^* F-1) \right\rbrace$.

\item $\dim_P (\ell_{u,t} \cap F) \leq \max \left\lbrace 0, \, \frac{\dim_P F}{\dim^* F} \cdot (\dim^* F-1) \right\rbrace$, where $\dim_P X$ denotes the packing dimension of a set $X$.
\end{enumerate}
Since  for non Ahlfors regular carpets we have  $\dim_H F < \dim_P F < \dim^* F$ (see e.g. \cite[Chapter 4]{bishop2013fractal}),  these results strictly  improve \eqref{Eq LHS} for both the packing and the Hausdorff dimension. However, the best possible upper bounds  remain unknown  (see e.g. Fraser's question   \cite[Question 8.3]{fraser2020fractal}).

Let us now state our main result: Given $r>0$ and $x\in \mathbb{R}$ we denote by $B(x,r)$ the open ball about $x$ with radius $r$. Also, denote the coordinate projections $P_1(x,y)=x$ and $P_2(x,y)=y$.

\begin{theorem} \label{Main Theorem}
Let $F$ be a Bedford-McMullen carpet with exponents $m>n$ such that there exists $i_0\in  \lbrace 0,...,m-1 \rbrace $ satisfying 
$$\lbrace j\in \lbrace 0,...,n-1 \rbrace:\quad  (i_0,j)\in D\rbrace=  \lbrace 0,...,n-1 \rbrace.$$
Then there  exists  $r=r(F)>0$ such that for every $u\in B(0,r)$ there is some $t\in \mathbb{R}$ with
$$\dim^* \ell_{u,t} \cap F =  \dim^* F-1.$$
\end{theorem}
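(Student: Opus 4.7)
The plan is to match the upper bound from Theorem~\ref{Theorem known} with a lower bound of the same value, producing for each small slope $u$ a specific line $\ell_{u,t_u}$ whose slice through $F$ admits a microset of Hausdorff dimension at least $\log_m N_{\max} = \dim^* F - 1$. Here $N_{\max}=\max_j |R_j|$ with $R_j = \{i : (i,j)\in D\}$, and the identity $\log_m N_{\max} = \dim^* F - 1$ follows from Mackay's formula together with the hypothesis that column $i_0$ is full (which forces $N_y = n$, i.e.\ every row non-empty). The key observation is that microsets of $\ell_{u,t}\cap F$ obtained by blowing up at a point $p_0 \in \ell_{u,t}\cap F$ take the form $\ell_{u,0} \cap M$, where $\ell_{u,0}$ is the slope-$u$ line through the origin and $M$ is a microset of $F$ at $p_0$. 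Thus it suffices to engineer a microset $M$ of $F$ of the form $C \times I$, with $C$ a self-similar Cantor set of dimension $\log_m N_{\max}$ and $I$ a non-degenerate interval, at a point $p_0$ that can be placed on the chosen line.

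To construct such $M$, fix $j^* \in \{0,\dots,n-1\}$ with $|R_{j^*}|=N_{\max}$ and set $p_0=\bigl(i_0/(m-1),\,j^*/(n-1)\bigr)$, which lies in $F$ because $(i_0,j^*)\in D$ by the full-column hypothesis. I blow up $F$ conformally at $p_0$ through the scales $r_k=m^{-k}$. A point $(x,y)\in F$ falling in the window $[p_0-m^{-k},\,p_0+m^{-k}]^2$ must have its first $k$ $x$-digits equal to $i_0$ (from the horizontal stage-$k$ cell around $p_0$) and, since $m>n$ forces the window's height $2m^{-k}$ to sit inside a single vertical stage-$l$ cell with $l=l(k) \approx k\log m/\log n$, its first $l$ $y$-digits equal to $j^*$. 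The coupling $(x_s,y_s)\in D$ then pins $x_s\in R_{j^*}$ for the intermediate range $k<s\le l$, while the digits for $s>l$ remain free in $D$. In the rescaled coordinates $(x',y')=m^k\bigl((x,y)-p_0\bigr)$, the $x'$-coordinate decomposes as a prefix Cantor-set sum over $R_{j^*}$-digits of length $l-k\to\infty$ (which Hausdorff-converges to the translated self-similar Cantor set $C := C_{R_{j^*}} - i_0/(m-1)$, where $C_{R_{j^*}} = \{\sum_s x_s m^{-s} : x_s \in R_{j^*}\}$) plus a remainder of order $m^{-(l-k)}\to 0$; the $y'$-coordinate, after normalization by the factor $(m/n)^k\approx n^{l-k}$, becomes $\sum_{s\ge 1}(y_{l+s}-j^*)n^{-s}$, which as the free digits $y_{l+s}$ range over $\{0,\dots,n-1\}$ (possible because $N_y=n$) sweeps out the interval $I:=[-j^*/(n-1),\,1-j^*/(n-1)]$. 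The two coordinates are governed by disjoint blocks of digits and hence decouple in the Hausdorff limit, yielding $M = C \times I$.

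Setting $t_u := j^*/(n-1) - u\cdot i_0/(m-1)$ forces $\ell_{u,t_u}$ to pass through $p_0$, and so the microset of $\ell_{u,t_u}\cap F$ at $p_0$ equals $\ell_{u,0}\cap M = \{(x,ux) : x\in C,\ ux\in I\}$. For $|u|$ sufficiently small the constraint $ux\in I$ is essentially vacuous on $C$: when $j^*\notin\{0,n-1\}$ the origin lies in the interior of $I$, so $ux\in I$ holds for every $x\in C$ once $|u|$ is below a threshold depending on the diameter of $C$ and on the distance from $0$ to $\partial I$, and the intersection is an isometric copy of the full Cantor set $C$; in the edge cases $j^*\in\{0,n-1\}$ one uses $0\in C$ (since $i_0\in R_{j^*}$) together with the self-similarity of $C$ to extract a non-trivial piece of full Hausdorff dimension. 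In either case $\dim_H(\ell_{u,0}\cap M) \ge \log_m N_{\max}$, hence $\dim^*(\ell_{u,t_u}\cap F) \ge \log_m N_{\max} = \dim^* F-1$, and combining with the matching upper bound from Theorem~\ref{Theorem known} gives the claimed equality. The principal technical obstacle is rigorously verifying the Hausdorff convergence of the blow-ups to $M = C \times I$---both that the horizontal remainder of order $m^{-(l-k)}$ uniformly vanishes and, more essentially, that the $x'$- and $y'$-coordinates decouple in the limit; this decoupling, which produces the interval factor in $M$, crucially relies on the full-column hypothesis allowing $y$-digits to be chosen freely in $\{0,\dots,n-1\}$ at the scales that govern $y'$.
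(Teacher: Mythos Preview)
Your approach is genuinely different from the paper's and, with one correction, is sound. The paper never identifies an explicit microset: it builds a convergent sequence of lines $\ell_{u,t_i}$ by an inductive perturbation argument (Claims~\ref{Lemma 2} and~\ref{Claim hori line}), tracks covering numbers inside carefully chosen cells $D_i\in\mathcal{D}_{m^{k_i}}$, and then invokes Furstenberg's formula (Theorem~\ref{Furstneberg Formula}) on the limit line. Your route is more direct and more geometric: you pin down the single point $p_0=(i_0/(m-1),\,j^*/(n-1))$, exhibit the product microset $M=C\times I$ of $F$ there, and read off the slice microset as a graph over $C$. This buys you an explicit $t_u$ (a linear function of $u$) and avoids the limiting-line machinery entirely; the paper's argument, on the other hand, is more robust to boundary effects and does not need the case split on whether $j^*\in\{0,n-1\}$.

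Two points require care. First, you invoke Theorem~\ref{Theorem known} for the upper bound, but that theorem assumes $\log m/\log n\notin\mathbb{Q}$, which is \emph{not} a hypothesis of Theorem~\ref{Main Theorem}. The correct reference is Proposition~\ref{Prop upper bound}, which needs only $P_2(F)=[0,1]$ (a consequence of the full-column hypothesis). Second, your sentence ``microsets of $\ell_{u,t}\cap F$ obtained by blowing up at $p_0$ take the form $\ell_{u,0}\cap M$'' is not valid as a general principle: Hausdorff limits do not commute with intersection, and only the inclusion (microset of slice) $\subseteq \ell_{u,0}\cap M$ is automatic. What actually makes your argument work is that for each $c\in C$ one can \emph{explicitly} place a point of $\ell_{u,t_u}\cap F$ whose blow-up tends to $(c,uc)$: take $x$ with first $k$ base-$m$ digits $i_0$, next $l-k$ digits approximating $c+i_0/(m-1)$ inside $R_{j^*}$, and all further digits equal to $i_0$; then $y=ux+t_u$ has its first $l$ base-$n$ digits equal to $j^*$ (this is where $|u|$ small and $0<j^*<n-1$ are used), while for $s>l$ one has $x_s=i_0$ and the full-column hypothesis makes $(i_0,y_s)\in D$ \emph{whatever} $y_s$ is. So the full column is used not to let the $y$-digits sweep $I$ freely, but to certify membership in $F$ once $y$ is forced by the line equation. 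Your ``decoupling'' paragraph contains this idea implicitly; just make the logic explicit so that the reverse inclusion is actually proved rather than asserted.
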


Notice that the assumption made on $F$ implies that there exists $x_0\in[0,1]$ such that 
$$ \lbrace x_0 \rbrace \times [0,1] \subseteq F.$$
In particular, $P_2(F)=[0,1]$ and so $\dim^* F \geq 1$ by \cite{mackay2011assouad}. It is an interesting question whether it suffices to assume that  $P_2(F) = [0,1]$, or even just that $\dim^* F \geq 1$, in order for Theorem \ref{Main Theorem} to hold true. We leave this to future research.

Theorem \ref{Main Theorem} is also related to a question raised by Shmerkin \cite[Section 8  Remark (b)]{shmerkin2016furstenberg}: Given a set $E\subseteq \mathbb{R}^2$ with $\dim_H E>1$, show that for many (in some sense) pairs $(u,t)$ one has $\dim_H E\cap \ell_{u,t} \approx \dim_H E-1$. Theorem \ref{Main Theorem} offers some progress towards this goal, though for star dimension rather than Hausdorff dimension.

We end this introduction with a sketch of our method. Let $F$ be a Bedford-McMullen carpet as in Theorem \ref{Main Theorem}. We will prove that for all $ |u|$ small enough  there exists some   $t\in \mathbb{R}$ and $A,B>0$ such that the following holds: Letting $\mathcal{D}_{m^p}$ denote the $m^p$-adic partition of $\mathbb{R}^2$ (see Section  \ref{Section microsets}),  for every $i\in \mathbb{N}$  there are  integers $k_i\geq 0$ and cells $D_i \in \mathcal{D}_{m^{k_i}}$ such that
\begin{equation} \label{Eq claimed line}
N(\ell_{u,t}\cap F \cap D_i,\, \mathcal{D}_{m^{k_i+i}})\geq A\cdot \left(  m^{(\dim^* F -1)\cdot i}\right)-B,
\end{equation}
where $N(X,\, \mathcal{D}_{m^p})$ is the corresponding covering number of a set $X$. Via Furstenberg's formula for star dimension  (see Theorem \ref{Furstneberg Formula} below) this implies that $\dim^* \ell_{u,t}\cap F \geq \dim^* F -1$. Since $P_2(F)=[0,1]$ the upper bound $\dim^* \ell_{u,t}\cap F \leq \dim^* F -1$ always holds true (see Proposition \ref{Prop upper bound} below), so all in all we obtain $\dim^* \ell_{u,t}\cap F = \dim^* F -1$.

Our basic observation is that our assumption on $F$ implies that for some small $c_0=c_0(F)>0$, if $|u|\leq c_0$ then the projection of $F$ in the direction transverse to the line $\ell_{u,0}$ is an interval of size $1-o(1)$. This is proved in Lemma \ref{lemma:projection-condition-1}. When $|u|\leq c_0$ we derive from this fact  two important consequences: First, that if $t$ and $Q\in \mathcal{D}_{m^k}$ satisfy  $\ell_{u,t} \cap F\cap D \neq \emptyset$,  then, given $b\in \mathbb{N}$,  if $t'$ and $t$ are very close then
\begin{equation} \label{Eq 1}
N\left(\ell_{u,t} \cap F\cap Q, \, \mathcal{D}_{m^{k+b}} \right) \approx  N\left(\ell_{u,t'} \cap F\cap Q, \, \mathcal{D}_{m^{k+b}} \right).
\end{equation}
This is proved in Claim \ref{Lemma 2}.  Secondly, let $t\in [0,1]$ be such that $\dim_H  \ell_{0,t}\cap F = \dim^* F-1$ (the existence of such a $t$ follows from \cite{mackay2011assouad}). Then for every $p\in \mathbb{N}$, for all $|u|\ll 1$ in a manner dependent on $p$,  there is a small open neighbourhood (that depends on all previous parameters) such that every $t'$ there satisfies
\begin{equation} \label{Eq 2}
N(\ell_{u,t'}\cap F , \, \mathcal{D}_{m^p} ) \approx   N(\ell_{0,t}\cap F , \, \mathcal{D}_{m^p}) = m^{(\dim^* F-1)\cdot p}.
\end{equation}
Note that it is the left "$\approx$" that is of interest here; the other equality is well known. This is proved in Claim \ref{Claim hori line}.

Using these two estimates on the effects of small perturbations of  slope and intercept on the covering numbers,  the  line as in \eqref{Eq claimed line} is obtained as a (Hausdorff metric) limit of a sequence of lines. These lines are constructed inductively: The first line is constructed so that \eqref{Eq 2} holds for $p=1$, and its slope $|u|<c_0$  so that \eqref{Eq 1} holds. For the inductive step, we assume all previous lines  have the same slope $u$ as in the first step. The next line is constructed so that it satisfies \eqref{Eq 2} for $p$, and so that its intercept lies in small enough neighbourhoods of \textit{all} the intercepts of the previous lines, so that \eqref{Eq 1} may be applied. Here (and in many other proofs in this paper) the self-affine structure of $F$ is used in a crucial way. Note that in this informal discussion we did not disclose how the integers $k_i\geq 0$ and cells $D_i \in \mathcal{D}_{m^{k_i}}$ from \eqref{Eq claimed line} are obtained; We refer the reader to Claim \ref{Key claim} for the full details of this construction.

\medskip

\noindent{\textbf{Acknowledgement}} The authors  thank Mike Hochman  for his remarks on previous versions of this manuscript.

\section{On the proof of Theorem \ref{Main Theorem}} \label{Section opt}
\subsection{Bedford-McMullen carpets} \label{Section approximate squares}

Let $\Phi = \lbrace \phi_i \rbrace_{i\in\Lambda} $ be a family of contractions $\phi_i : \mathbb{R}^d \rightarrow \mathbb{R}^d, d\geq 1$, where $\Lambda$ is a finite alphabet set. The family $\Phi$ is called an iterated function system (IFS). It is well known that there exists a unique compact $\emptyset \neq F \subset \mathbb{R}^d$ such that $F = \bigcup_{i\in\Lambda} \phi_i (F)$. $F$ is called the attractor of $\Phi$, and $\Phi$ is called a generating IFS for $F$.  The set of finite words over $\Lambda$ is denoted $\Lambda^*$,  i.e.,  $\Lambda^*=\cup_{n\ge 1}\Lambda^n$.  For a multi-index $I=(i_1,...,i_k) \in \Lambda^*  $, we define its length  $|I|\in \mathbb{N}$ by $k$, and write 
\begin{equation*}
\phi_I := \phi_{i_1} \circ ... \circ \phi_{i_k}.
\end{equation*}
The map $\phi_I$ is called a cylinder map of the IFS, whereas the set $\phi_I (F)$ is called a cylinder set of $F$.  If $I= (i_1,i_2, ... )\in\Lambda^\mathbb{N}$ is infinite, we define $\phi_I \in F$ by  
$$\phi_I := \lim_{k\to\infty}\phi_{i_1} \circ ... \circ \phi_{i_k}(0).$$
Finally, a set $F \subset \mathbb{R}^d$ is  called self affine if there exists a generating IFS $\Phi$ for $F$ such that $\Phi$ consists only of affine mappings.

Next, recall the definition of a Bedford-McMullen carpet $F$ with defining exponents $m,n$ and allowed digit set $D$ from Section \ref{introduction}. Notice  that $F$ is a self affine set generated by an IFS consisting of maps whose linear parts are diagonal matrices. Specifically, $F$ is the attractor of $\Phi = \lbrace \phi_{(i,j)} \rbrace_{(i,j) \in D}$ where
\begin{equation} \label{Genrating IFS for F}
\phi_{(i,j)} (x,y) = \left(\frac{x+i}{m}, \frac{y+j}{n}\right) = \begin{pmatrix}
\frac{1}{m} & 0 \\
0 & \frac{1}{n}
\end{pmatrix}
\cdot (x,y) + \left(\frac{i}{m},\frac{j}{n}\right).
\end{equation}

 \subsection{Star dimension  and covering numbers}  \label{Section microsets}  
Let $X$ be a compact metric space. Let $\cmpct(X)$ denote the set of non-empty closed subsets of $X$. For $A,B\in\cmpct(X)$ and $\epsilon >0$ define
\begin{equation*}
A_\epsilon = \lbrace x \in X : \quad \exists a\in A, d(x,a) < \epsilon \rbrace.
\end{equation*}
The Hausdorff distance between $A$ and $B$ is defined by
\begin{equation*}
d_H (A,B) = \inf \lbrace \epsilon >0 : \, A \subseteq B_\epsilon, \quad B \subseteq A_\epsilon \rbrace. 
\end{equation*}
Endowed with this metric, $\cmpct(X)$ becomes a compact metric space (see e.g. the appendix in \cite{bishop2013fractal}).  

Now,  let us restrict to $X=[-1,1]^2$. Let $F\subseteq [0,1]^2$  be a compact set. A set $A \subseteq [-1,1]^2$ is called a  miniset of $F$ if $A \subseteq (a \cdot F + t)\cap [-1,1]^2$ for some $a \geq 1, t\in \mathbb{R}$. A set $M$ is called a  microset of $F$ if $M$ is a limit (in the  Hausdorff metric) of minisets of $F$. Let $\mathcal{G}_F$ denote the family of all microsets of $F$. Recall, from \eqref{Eq star dim}, that the star dimension of $F$ is the defined as
\begin{equation*}
\dim^* F = \sup \lbrace \dim_H A : \, A\in \mathcal{G}_F \rbrace.
\end{equation*}

Alternatively, one may compute $\dim^* F$ using a formula due to Furstenberg \cite{furstenberg2008ergodic}. To this end, recall that if $X\subset \mathbb{R}^i$, $i=1,2$ is a bounded set and $\mathcal{G}$ is some  partition of $\mathbb{R}^i$, then the covering number of $X$ with respect to $\mathcal{G}$ is defined as
\begin{equation*}
N(X,\, \mathcal{G}) := | \lbrace D\in \mathcal{G}:\, D\cap X\neq \emptyset \rbrace|.
\end{equation*}
In all our applications this will be a finite number. 

Consider the partition $\mathcal{D}_{m^p} $ of $\mathbb{R}$, where $m>1,p \geq 1$ are integers, which is defined as
$$\left\lbrace \left[\frac{k}{m^p}, \frac{k+1}{m^p}\right):\, k\in \mathbb{Z}\right\rbrace.$$
Then $\mathcal{D}_{m^p} \times \mathcal{D}_{m^p}$ forms a partition of $\mathbb{R}^2$. We will usually abuse notation and denote this partition by $\mathcal{D}_{m^p}$ as well (which partition is meant will be clear from context). Here is Furstenberg's formula for star dimension:
\begin{theorem} \label{Furstneberg Formula} (Furstenberg, \cite{furstenberg2008ergodic})
Let $X\subseteq [-1,1]^2$ be a compact non-empty set, and let $m\geq 2$ be an integer. Then
\begin{equation*}
\dim^* X = \lim_{i\rightarrow \infty} \max_{k\in \mathbb{N}} \left \lbrace \frac{\log N(X\cap D, \,  \mathcal{D}_{m^{k+i}})}{i\log m} : D\in \mathcal{D}_{m^k} \right \rbrace.
\end{equation*}
\end{theorem}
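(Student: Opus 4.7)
Denote the right-hand side by $\alpha$ and set $a_i := \max_{k \in \mathbb{N}} \max_{D \in \mathcal{D}_{m^k}} \log N(X \cap D, \mathcal{D}_{m^{k+i}})$. The plan is to first check that $a_i/(i \log m)$ converges, so that $\alpha$ is well defined, and then establish $\dim^* X \leq \alpha$ and $\dim^* X \geq \alpha$ separately. For existence of the limit, I would verify the subadditivity $a_{i+j} \leq a_i + a_j$: given $D \in \mathcal{D}_{m^k}$, partition it into its sub-cells at scale $m^{-(k+i)}$; at most $e^{a_i}$ of them meet $X$, and each contributes at most $e^{a_j}$ cells to $N(X \cap D, \mathcal{D}_{m^{k+i+j}})$. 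Fekete's lemma then yields $\alpha = \inf_i a_i/(i\log m)$.

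For the upper bound it suffices to show every microset $M$ of $X$ has upper box dimension at most $\alpha$, since Hausdorff dimension is dominated by upper box dimension. Starting from a miniset $A \subseteq (aX + t)\cap [-1,1]^2$ with $m^k \leq a < m^{k+1}$, each cell of $\mathcal{D}_{m^j}$ meeting $A$ pulls back under the defining affine map to a set of diameter $\lesssim m^{-(k+j)}$ inside $X$; such a set can only meet $O(1)$ cells of $\mathcal{D}_{m^{k+j}}$, and the whole miniset lies within $O(1)$ cells of $\mathcal{D}_{m^k}$, so $N(A, \mathcal{D}_{m^j}) \leq C\, e^{a_j}$ for an absolute $C$. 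Passing to a Hausdorff limit $A_\ell \to M$ changes covering numbers by at most a dimensional constant (adjacent cells at scale $m^{-j}$ can be swapped for one another once the Hausdorff distance drops below $m^{-j}/3$), so the same bound persists for $M$; letting $j \to \infty$ delivers the desired box-dimension inequality.

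For the lower bound, I would first select $k_i, D_i$ realising $a_i$ and blow each $D_i$ up affinely to $[0,1]^2$, obtaining minisets $A_i$ with $N(A_i, \mathcal{D}_{m^i}) = e^{a_i}$; a Hausdorff-convergent subsequence produces a microset $M_0$ whose upper box dimension is at least $\alpha$. The main obstacle is that this only gives a \emph{box}-dimension bound on $M_0$, whereas we need a \emph{Hausdorff}-dimension bound on some microset. To upgrade, I would exploit the fact that microsets of microsets of $X$ are themselves microsets of $X$, and diagonalise: inside $M_0$ find a cell with near-maximal branching at scale $m^{-1}$ and blow it up to produce a microset $M_1$; inside $M_1$ find a cell with near-maximal branching at the first two scales and blow it up to produce $M_2$; and so on. A diagonal Hausdorff limit yields a microset $M^*$ in which every small cell at every scale branches at rate at least $m^{(\alpha - \varepsilon) j}$. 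Such uniform branching supports a Cantor-type probability measure of Frostman exponent $\alpha - \varepsilon$, and the mass distribution principle gives $\dim_H M^* \geq \alpha - \varepsilon$; sending $\varepsilon \to 0$ closes the gap. The delicate point is ensuring that the branching requirements imposed at earlier scales remain intact as the induction progresses, which forces one to choose the blow-up at each stage nested inside the cell selected at the previous stage, and to use compactness of $\cmpct([-1,1]^2)$ in the Hausdorff metric at the end to extract the diagonal limit.
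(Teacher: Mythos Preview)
The paper does not prove this statement at all: Theorem \ref{Furstneberg Formula} is quoted as a known result of Furstenberg \cite{furstenberg2008ergodic} and is used as a black box in the proof of Theorem \ref{Main Theorem}. There is therefore no ``paper's own proof'' to compare against.

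That said, your outline is a reasonable reconstruction of the standard argument. The subadditivity step and the upper bound are routine and correct as sketched. For the lower bound, your instinct is right that the hard part is upgrading from upper box dimension to Hausdorff dimension, and that this requires building a microset with \emph{uniform} branching at every scale. One small gap in your sketch: you propose to ``find a cell with near-maximal branching at scale $m^{-1}$'' inside $M_0$, but the fact that $\overline{\dim}_B M_0 \geq \alpha$ does not by itself guarantee good branching at the \emph{first} scale. What does guarantee it is that $\alpha = \inf_i a_i/(i\log m)$, so $a_i \geq \alpha\, i \log m$ for \emph{every} $i$; in particular $a_1 \geq \alpha \log m$, and more generally every microset of $X$ inherits the same bound $a_i$ on its covering numbers. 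This is what lets the induction proceed: at each stage you can find, inside the current microset, a level-$1$ cell whose subtree down to depth $j$ has at least $m^{(\alpha-\varepsilon)j}$ leaves, and then blow that cell up. With this correction your diagonalisation and Frostman/mass-distribution conclusion go through as stated.
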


We will require the following definition:
\begin{Definition} \label{Def adj}
We say that two cells $D,D' \in \mathcal{D}_{m^p}$ are adjacent if their closures intersect non-trivially. 
\end{Definition}
In particular, this means that every $D$ is adjacent to itself.

The following elementary Lemma is about a certain type of continuity covering numbers posses with respect to the Hausdorff metric:

\begin{Lemma} \label{Lemma 0}
Let $X_k \rightarrow X$ in the Hausdorff metric on compact sets of $[0,1]^2$, and let $m\geq 2$ be an integer. Then for every $p\geq 3$ there is some $N=N(m,p)\in \mathbb{N}$ such that for every $k>N$
\begin{equation*}
N(X, \,\mathcal{D}_{m^p}) \geq \frac{1}{9}\cdot N(X_k,\, \mathcal{D}_{m^p}).
\end{equation*}
\end{Lemma}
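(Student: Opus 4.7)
The plan is to use directly the definition of Hausdorff convergence together with the geometry of the grid $\mathcal{D}_{m^p}$. Fix $\epsilon>0$ strictly less than the cell side length $m^{-p}$, say $\epsilon = \tfrac12 m^{-p}$. Choose $N=N(m,p)$ large enough (depending implicitly on the sequence) so that $X_k\subseteq X_\epsilon$ for every $k>N$; this is possible since $X_k\to X$ in the Hausdorff metric.

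First I would set up a map
$$\Psi\colon \{D\in\mathcal{D}_{m^p}:D\cap X_k\neq\emptyset\}\;\longrightarrow\;\{D'\in\mathcal{D}_{m^p}:D'\cap X\neq\emptyset\}$$
as follows: for each cell $D$ meeting $X_k$, pick any $x\in D\cap X_k$; since $X_k\subseteq X_\epsilon$, there is some $y\in X$ with $|x-y|<\epsilon$, and we set $\Psi(D)$ to be the cell of $\mathcal{D}_{m^p}$ containing $y$. Because $\epsilon<m^{-p}$, the point $y$ lies either inside $D$ or inside one of the at most eight cells of $\mathcal{D}_{m^p}$ whose closure meets the closure of $D$, i.e.\ $\Psi(D)$ is adjacent to $D$ in the sense of Definition \ref{Def adj}.

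Next I would bound the fibers of $\Psi$. For a fixed $D'$ meeting $X$, any cell $D$ with $\Psi(D)=D'$ is, by construction, adjacent to $D'$; since each cell in the planar grid $\mathcal{D}_{m^p}$ has at most $9$ cells adjacent to it (itself and its $8$ neighbors), we obtain $|\Psi^{-1}(D')|\le 9$. Summing over all cells meeting $X$ gives
$$N(X_k,\mathcal{D}_{m^p})\;\le\;9\cdot N(X,\mathcal{D}_{m^p}),$$
which is exactly the claimed inequality upon dividing by $9$.

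No serious obstacle arises: the argument is a routine use of Hausdorff approximation combined with the elementary observation that the $3\times 3$ adjacency neighborhood in the planar grid has cardinality at most $9$. The assumption $p\geq 3$ is merely a convenience guaranteeing the cells sit well inside $[-1,1]^2$ and is not actually used in the count.
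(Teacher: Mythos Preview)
Your proof is correct and follows essentially the same approach as the paper's own argument: both construct a map from cells meeting $X_k$ to cells meeting $X$ by picking a nearby point of $X$, observe that the image cell must be adjacent to the original one, and use the at-most-$9$-to-$1$ property of adjacency to conclude. The only cosmetic difference is the choice of threshold ($\epsilon=\tfrac12 m^{-p}$ versus the paper's $m^{-2p}$), which is immaterial since any $\epsilon<m^{-p}$ suffices.
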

\begin{proof}
Let $N$ be large enough so that $d_H (X_k,\,X)\leq \frac{1}{m^{2p}}$ for every $k>N$. For every $k>N$ and   $D\in \mathcal{D}_{m^p}$  such that $D\cap X_k\neq \emptyset$, let $x_D \in D\cap X_k$ be some point. Since $d_H (X_k,\,X)\leq \frac{1}{m^{2p}}$, it follows that there is some $y\in X$ such that $d(x_D,\,y)\leq \frac{1}{m^{2p}}$. Therefore, there is an adjacent cell $D'$ to $D$ in $\mathcal{D}_{m^p}$ such that $y\in D'$. Thus, for every $D\in \mathcal{D}_{m^p}$ such that $D\cap X_k \neq \emptyset$  there is some $D'\in \mathcal{D}_{m^p}$ that is adjacent to $D$ such that $D'\cap X \neq \emptyset$. This gives us a mapping\footnote{Notice that $T$ depends on the choice of $x_D\in X_k\cap D$, but we suppress this in our notation.} 
$$T: \lbrace D\in \mathcal{D}_{m^p}: D\cap X_k \neq \emptyset \rbrace \rightarrow  \lbrace D\in \mathcal{D}_{m^p}: D\cap X \neq \emptyset \rbrace$$
This map may very well not be one-to-one, since there might be different $D\in \mathcal{D}_{m^p}$ that have mutual adjacent cells in $\mathcal{D}_{m^p}$. However, this map is (at most) $9$-to-$1$, since every cell has at most $9$ adjacent cells. Thus, 
$$N(X,\,\mathcal{D}_{m^p}) = \left| \lbrace D\in \mathcal{D}_{m^p}:\, D\cap X \neq \emptyset \rbrace \right| \geq \left| \text{Range} (T)\right|$$
$$ \geq \frac{1}{9} \left| \lbrace D\in \mathcal{D}_{m^p}:\, D\cap X_k \neq \emptyset \rbrace  \right| = \frac{1}{9}\cdot N(X_k,\, \mathcal{D}_{m^p}).$$
This is the claimed inequality.
\end{proof}

Finally, we recall the following standard fact about the star dimension of slices through Bedford-McMullen carpets:
\begin{Proposition} \label{Prop upper bound}
Let $F$ be a Bedford-McMullen carpet  with exponents $m>n$ such that $P_2(F)=[0,1]$. Then for all $(u,t)\in \mathbb{R}^2$ we have 
$$\dim^* \ell_{u,t} \cap F\leq \dim^* F-1.$$
\end{Proposition}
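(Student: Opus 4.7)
The plan is to apply Furstenberg's formula for star dimension (Theorem \ref{Furstneberg Formula}) and bound the resulting covering numbers by $R^i$, where $R := \max_{j \in \{0,\ldots,n-1\}} |\{i:(i,j)\in D\}|$. By Mackay's formula \cite{mackay2011assouad}, the hypothesis $P_2(F)=[0,1]$, which forces every row of $D$ to be non-empty, gives $\dim^* F = 1 + \log_m R$, so the goal becomes $\dim^*(\ell_{u,t}\cap F) \le \log_m R$. Via Theorem \ref{Furstneberg Formula}, it suffices to produce a constant $C = C(F,u,t)$ with
\[
N\bigl((\ell_{u,t}\cap F)\cap Q,\, \mathcal{D}_{m^{k+i}}\bigr) \le C\cdot R^i \qquad \forall\, k, i\in\mathbb{N},\ \forall\, Q\in\mathcal{D}_{m^k}.
\]

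The horizontal case $u=0$ reduces to the standard digit-compatibility argument: writing $t = 0.t_1 t_2\ldots$ in base $n$, a point $(x,t)\in F$ has each base-$m$ digit $x_s$ restricted to $\{i:(i,t_s)\in D\}$, a set of size at most $R$. Restricting further to $Q$ fixes the first $k$ base-$m$ digits of $x$, leaving at most $R^i$ admissible continuations of length $k+i$. For $u\ne 0$, I would iterate the IFS decomposition $F = \bigcup_{(a,b)\in D}\phi_{(a,b)}(F)$, using that $\phi_{(a,b)}^{-1}$ converts a line of slope $u$ into one of slope $u\cdot n/m$. Since $n/m<1$, finitely many iterations render the line near-horizontal at the target scale, whereupon the count reduces to the horizontal estimate within each rescaled cylinder; the finitely many pre-horizontal iterations contribute only a constant factor to $C$.

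The principal obstacle will be reconciling the $m$-adic target partition $\mathcal{D}_{m^{k+i}}$ with the natural IFS cylinders of extent $m^{-L}\times n^{-L}$. I would handle this by working with \emph{approximate squares}, i.e., IFS cylinders at $y$-level $L$ grouped with $\lceil L\log n/\log m\rceil$ $x$-levels, yielding cells of side $\approx n^{-L}$. Choosing $L$ so that $n^{-L}\approx m^{-(k+i)}$ ensures each approximate square meets only $O(1)$ cells of $\mathcal{D}_{m^{k+i}}$. Within each approximate square, the slope contraction from iteration renders the line effectively horizontal at this scale, and a row-by-row count, with each of the $L$ successive $y$-digits contributing at most $R$ admissible column-choices, supplies the $R^i$ bound. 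Careful tracking of the constants through the rescaling is the main remaining technical effort.
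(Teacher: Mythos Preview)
Your approach is viable but genuinely different from the paper's. The paper does not attempt a covering-number estimate at all; it cites the structural fact (from \cite{algom2016self, kaenmaki2016rigid}) that every microset of a Bedford--McMullen carpet $F$ with $P_2(F)=[0,1]$ is a product $X\times[0,1]$ with $X$ contained in some horizontal slice $\ell_{0,s}\cap F$. Any microset $M$ of $\ell_{u,t}\cap F$ then lies simultaneously in a line of slope $u$ and in such a product, hence projects injectively onto a subset of $X$, giving $\dim_H M\le\dim_H X\le\log_m R=\dim^*F-1$ by Mackay. The argument is essentially one line once the product structure is imported. Your route through Furstenberg's formula is more self-contained---no black-box microset theorem---but also more delicate: the $u=0$ digit argument is correct, yet for $u\ne0$ the naive ``iterate the IFS to flatten the slope'' picks up a boundary factor at every step (the line may straddle two $n$-adic rows), producing $(CR)^i$ rather than $C\cdot R^i$. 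The approximate-square mechanism you propose is exactly what kills this compounding, since within an approximate square the carpet is genuinely a product at the relevant scale; but working this out in full amounts to re-deriving, by hand, the very microset product structure the paper takes off the shelf. So your method buys independence from the cited references at the cost of reproving their content.
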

Proposition \ref{Prop upper bound} is a simple consequence of the following two  facts: Let $F$ be a carpet as in the Proposition. First, microsets of $F$ are product sets with the marginals being $P_2(F)=[0,1]$ and  $\ell_{0,t}\cap F$ for some $t\in [0,1]$ (see e.g. \cite{algom2016self, kaenmaki2016rigid}). Secondly, it follows from the work of Mackay \cite{mackay2011assouad} that $\dim_H  \ell_{0,t}\cap F = \dim^* F-1$ for some $t\in [0,1]$. See e.g. \cite[proof of Theorem 1.2]{algom2018slicing} for a closely related argument that combines these two facts to study $\dim^* \ell_{u,t} \cap F$.

\subsection{Perturbing the intercept of slices through carpets }
We begin with a definition: For $\kappa\in \mathbb{R}$, let $\pi_{\kappa} :\mathbb{R}^2 \rightarrow \mathbb{R}$ denote the orthogonal projection
\begin{equation*}
\pi_\kappa (x,y) = y+\kappa\cdot x.
\end{equation*}
In particular, $\pi_0 = P_2 $. 

Let $F$ be a Bedford-McMullen carpet as in Theorem \ref{Main Theorem}, with exponents $m>n$. In particular, recall that our assumptions on $F$ implies that there exists $x_0\in [0,1]$ such that 
$$ \lbrace x_0 \rbrace \times [0,1] \subseteq F.$$

We start with the following observation.
\begin{Lemma}\label{lemma:projection-condition-1}
Let $F$ be a Bedford-McMullen carpet as in Theorem \ref{Main Theorem}.
\begin{itemize}
\item[(1)] We have $d_H(\pi_\kappa(F),\pi_0(F))\to 0$ as $|\kappa|\to0$.
\item[(2)] There exists $c_0>0$ such that whenever $|\kappa|\le c_0$,  the projection $\pi_\kappa(F)$ is an interval.  
\end{itemize}
In particular, when $\kappa$ is small we have $|\pi_\kappa(F)|=1-o(1)$.
\end{Lemma}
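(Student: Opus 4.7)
The plan is to prove (1) by a direct Lipschitz estimate, and (2) by approximating $F$ from the inside by an increasing sequence of compact sets whose projections are visibly intervals, then passing to the Hausdorff limit.

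For (1), since $F\subseteq[0,1]^2$, one has $|\pi_\kappa(x,y)-\pi_0(x,y)|=|\kappa||x|\le|\kappa|$ on $F$, hence $d_H(\pi_\kappa(F),\pi_0(F))\le|\kappa|\to 0$.

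For (2), I would set $A_0:=\{x_0\}\times[0,1]$ and $A_{k+1}:=\bigcup_{(i,j)\in D}\phi_{(i,j)}(A_k)$. The hypothesis on $D$ forces $x_0=i_0/(m-1)$ and $A_0=\bigcup_{j}\phi_{(i_0,j)}(A_0)$, so $A_0\subseteq A_1$; iterating, $\{A_k\}$ is an increasing sequence of compact subsets of $F$ with $A_k\to F$ in the Hausdorff metric (each length-$k$ cylinder of $F$ contains a point of $A_k$ and has diameter $\le\sqrt{2}/n^k$). Because Hausdorff limits of bounded intervals in $\mathbb{R}$ are intervals, it will suffice to show that $\pi_\kappa(A_k)$ is an interval for every $k\ge 0$, whenever $|\kappa|\le c_0$ for some absolute $c_0>0$.

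The heart of the argument is an induction on $k$ driven by the self-similarity identity
$$\pi_\kappa \circ \phi_{(i,j)} \;=\; \tfrac{1}{n}\,\pi_{n\kappa/m}(\cdot) + \tfrac{j}{n} + \tfrac{\kappa i}{m},$$
which yields
$$\pi_\kappa(A_{k+1}) \;=\; \bigcup_{(i,j)\in D}\Bigl[\tfrac{1}{n}\pi_{n\kappa/m}(A_k) + \tfrac{j}{n} + \tfrac{\kappa i}{m}\Bigr].$$
The base case $\pi_\kappa(A_0)=[\kappa x_0,\,1+\kappa x_0]$ is immediate. For the inductive step, write $\pi_{n\kappa/m}(A_k)=[a',b']$ and $L:=b'-a'$; the monotonicity $A_k\supseteq A_0$ forces $L\ge 1$. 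The $n$ intervals coming from $i=i_0$, $j=0,\ldots,n-1$ then have left endpoints spaced by $1/n$ and each has length $L/n\ge 1/n$, so they fuse into a single interval of length $(n-1+L)/n$. Each remaining $(i,j)\in D$ contributes a shift of the $(i_0,j)$-interval by $\kappa(i-i_0)/m$, which overlaps it whenever $|\kappa|(m-1)/m\le L/n$; using $L\ge 1$, this holds for $|\kappa|\le c_0:=1/n$, and the bound $|n\kappa/m|\le c_0$ is automatic since $m>n$, keeping the induction hypothesis at a valid scale.

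Passing to $k\to\infty$, $\pi_\kappa(F)=\lim_k\pi_\kappa(A_k)$ is an interval, and the sandwich $\pi_\kappa(\{x_0\}\times[0,1])\subseteq\pi_\kappa(F)\subseteq\pi_\kappa([0,1]^2)$ gives $1\le|\pi_\kappa(F)|\le 1+|\kappa|$, so $|\pi_\kappa(F)|\to 1$ as $|\kappa|\to 0$. I expect the main obstacle to be this inductive step: it hinges on preserving the quantitative invariant $L\ge 1$ (which prevents gaps inside the $i_0$-column) and on controlling the off-column shifts $|\kappa(i-i_0)/m|$ against $L/n$ (which glues the non-$i_0$ columns onto the $i_0$-column rather than letting them split off into a separate component).
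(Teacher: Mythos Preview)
Your proof is correct and takes a genuinely different route from the paper's. The paper fixes $L=\{x_0\}\times[0,1]$, picks digits $(i_1,j_1),(i_2,j_2)\in D$ whose first-level cylinders project (under $\pi_\kappa$) to the leftmost and rightmost intervals respectively, and then shows that the chains $\pi_\kappa(f_{(i_1,j_1)^k}(L))$ and $\pi_\kappa(f_{(i_2,j_2)^k}(L))$ are sequences of overlapping intervals reaching out from $\pi_\kappa(L)$ to the two endpoints of $\pi_\kappa(F)$; connectedness of $\pi_\kappa(F)$ follows. In contrast, you approximate $F$ from the inside by the increasing sequence $A_k$ and prove directly, by a quantitative induction on $k$ exploiting the full column over $i_0$ together with the length invariant $L\ge 1$, that each $\pi_\kappa(A_k)$ is an interval.

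Your argument is more self-contained and yields an explicit constant $c_0=1/n$ (the paper only asserts existence of $c_0$), and the invariant $|\pi_{\kappa'}(A_k)|\ge 1$---which you extract from $A_k\supseteq A_0$---is exactly what makes the induction close at every scale $\kappa'=(n/m)^k\kappa$. The paper's approach, on the other hand, is more geometric: it isolates the two extremal orbits that actually realize $\min\pi_\kappa(F)$ and $\max\pi_\kappa(F)$ and only needs the overlap condition $\pi_\kappa(f_{i,j}(L))\cap\pi_\kappa(L)\neq\emptyset$ at one level, with self-affinity propagating it. Both hinge on the same identity $\pi_\kappa\circ\phi_{(i,j)}=\tfrac{1}{n}\pi_{n\kappa/m}+\text{const}$, but use it in complementary ways.
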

\begin{proof}
Part (1) is simple a consequence of the fact that for each $x\in \mathbb{R}^2$, the function $\kappa\mapsto \pi_\kappa(x)$ is continuous.

Let us  now turn to Part (2).  We assume, without loss of generality, that $\kappa\le 0$; the case when $\kappa\ge 0$ can be treated in exactly the same way.   Denote $L=\{x_0\}\times [0,1]$.  Let us fix $\kappa\le 0$ and pick $(i_1,j_1), (i_2,j_2)\in D$ such that 
\[
\min\{ x: x\in\pi_\kappa(f_{i_1,j_1}(L))\}=\min\{x:x\in\pi_\kappa(f_{i,j}L)),\quad  (i,j)\in D\}
\]
and
\[
\max\{ x: x\in\pi_\kappa(f_{i_2,j_2}(L))\}=\max\{x:x\in\pi_\kappa(f_{i,j}(L)),\quad  (i,j)\in D\}.
\]
The choices of $(i_1,j_1)$ and $(i_2,j_2)$ might be non-unique.  Since  $F$ is self-affine it  follows that 
\begin{equation}\label{eq:lemma:projection-condition-1-1}
\pi_\kappa(f_{(i_1,j_1)^\infty})=\min\{x:x\in\pi_\kappa(F)\}
\textrm{ and } 
\pi_\kappa(f_{(i_2,j_2)^\infty})=\max\{x:x\in\pi_\kappa(F)\}.
\end{equation}

It is clear that there exists $c_0>0$ such that whenever $-c_0\le \kappa\le 0$,  for every $(i,j)\in D$
\begin{equation}\label{eq:lemma:projection-condition-1-2}
\pi_\kappa(f_{i,j}(L))\cap \pi_\kappa(L)\neq \emptyset.
\end{equation}
From now on, we assume that $-c_0\le \kappa\le 0$. Since $F$ is self-affine,  for each  $k\ge 0$,  
$$\pi_\kappa(f_{(i_1,j_1)^{k+1}}(L))\cap \pi_\kappa(f_{(i_1,j_1)^{k}}(L))$$
is a rescaled and translated copy  of 
$$\pi_{\kappa\frac{n^k}{m^k}}(f_{(i_1,j_1)}(L))\cap \pi_{\kappa\frac{n^k}{m^k}}(L).$$ 
Thus, for all $k\geq 0$ 
\begin{equation}\label{eq:lemma:projection-condition-1-3}
\pi_\kappa(f_{(i_1,j_1)^{k+1}}(L))\cap \pi_\kappa(f_{(i_1,j_1)^{k}}(L))\neq \emptyset.
\end{equation}
The same holds true if we replace $(i_1,j_1)$ by $(i_2,j_2)$:
\begin{equation}\label{eq:lemma:projection-condition-1-4}
\pi_\kappa(f_{(i_2,j_2)^{k+1}}(L))\cap \pi_\kappa(f_{(i_2,j_2)^{k}}(L))\neq \emptyset.
\end{equation}
It now follows that $\pi_\kappa(F)$ is an interval, using \eqref{eq:lemma:projection-condition-1-1},   \eqref{eq:lemma:projection-condition-1-3},   \eqref{eq:lemma:projection-condition-1-4} and the following facts:
\[
\bigcup_{k\ge 0}\pi_\kappa(f_{(i_1,j_1)^{k}}(L))\subset \pi_\kappa(F)  \textrm{ and }  \bigcup_{k\ge 0}\pi_\kappa(f_{(i_2,j_2)^{k}}(L))\subset \pi_\kappa(F).
\]

Finally, we have seen that when $|\kappa|$ is small enough,  $\pi_\kappa(F)$ is an interval.  Hence by (1),  we have $|\pi_\kappa(F)|=1- o(1)$
\end{proof}

In the following Claim  we use Lemma \ref{lemma:projection-condition-1} to show that: If for some $t$ and $Q\in \mathcal{D}_{m^k}$ we have $\ell_{u,t} \cap F\cap D \neq \emptyset$,  then, given $b\in \mathbb{N}$,  if $t'$ and $t$ are very close then
$$ N\left(\ell_{u,t} \cap F\cap Q, \, \mathcal{D}_{m^{k+b}} \right) \approx  N\left(\ell_{u,t'} \cap F\cap Q, \, \mathcal{D}_{m^{k+b}} \right).$$

\begin{Claim}\label{Lemma 2}
There exists an absolute constant $C_1=C_1(F)>0$ depending only on $F$  and  $c_0$ (the constant from Lemma \ref{lemma:projection-condition-1}) such that the following holds: Let $0<u\le c_0$, $t\in \mathbb{R}$,  $k\in \mathbb{N}$ be such that
\begin{equation*}
\ell_{u,t} \cap F\cap Q \neq \emptyset, \text{ where } Q\in \mathcal{D}_{m^k}.
\end{equation*}
 Then  for every $b\in \mathbb{N}$ there exists $\delta=\delta(c_0,k,b,t) >0$  such that at least one of the following alternatives hold true:
\begin{enumerate}
\item For every $t'\in [t,t+\delta]$ we have
\begin{equation*}
N(\ell_{u,t'} \cap F\cap Q, \quad \mathcal{D}_{m^{k+b}} )\geq C_1 \cdot \left( N(\ell_{u,t} \cap F\cap Q, \quad  \mathcal{D}_{m^{k+b}})-1\right).
\end{equation*}

\item For every $t'\in [t-\delta,t]$ we have
\begin{equation*}
N(\ell_{u,t'} \cap F\cap Q, \quad \mathcal{D}_{m^{k+b}} )\geq C_1 \cdot \left( N(\ell_{u,t} \cap F\cap Q, \quad  \mathcal{D}_{m^{k+b}}) -1\right).
\end{equation*}
\end{enumerate}
\end{Claim}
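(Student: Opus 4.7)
The plan is to localize near each intersection cell via the self-affine structure of $F$, reduce the problem to a rescaled line of slope at most $c_0$, and then invoke Lemma~\ref{lemma:projection-condition-1} to obtain room to perturb the intercept. Concretely, let $D_1,\ldots,D_N\subseteq Q$ enumerate the cells of $\mathcal{D}_{m^{k+b}}$ with $\ell_{u,t}\cap F\cap D_i\neq\emptyset$, so $N=N(\ell_{u,t}\cap F\cap Q,\mathcal{D}_{m^{k+b}})$. For each $i$ I would pick a point $x_i\in\ell_{u,t}\cap F\cap D_i$, preferring $x_i\in\operatorname{int}(Q)$ whenever possible, and then take a long word $I_i$ with $x_i\in\phi_{I_i}(F)$ and $\diam(\phi_{I_i}(F))$ smaller than both $m^{-(k+b)}/2$ and the distance from $x_i$ to $\partial Q$, so that $\phi_{I_i}(F)\subseteq Q$ and is contained in $D_i$ together with its neighbouring $\mathcal{D}_{m^{k+b}}$-cells.

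Because $\phi_{I_i}$ is diagonal affine with horizontal ratio $m^{-|I_i|}$ and vertical ratio $n^{-|I_i|}$, the pullback $\phi_{I_i}^{-1}(\ell_{u,t})$ is the line $\ell_{\kappa_i,t_i'}$, where $\kappa_i:=u(n/m)^{|I_i|}\leq u\leq c_0$ and $t_i'$ is determined by the translation part of $\phi_{I_i}$. Since $\phi_{I_i}^{-1}(x_i)\in F$ lies on this line, $t_i'\in\pi_{-\kappa_i}(F)$, which by Lemma~\ref{lemma:projection-condition-1} is an interval. Hence $t_i'$ admits a nontrivial one-sided neighbourhood inside $\pi_{-\kappa_i}(F)$; pushing this forward by $\phi_{I_i}$ yields a one-sided neighbourhood of $t$ of some length $\delta_i>0$ such that $\ell_{u,t''}\cap F\cap\phi_{I_i}(F)\neq\emptyset$ for every $t''$ in it.

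Pigeonholing the direction forces at least $\lceil N/2\rceil$ of the $D_i$ to share the same side, and I set $\delta:=\min\delta_i$ over the pigeonholed indices. Then for every $t'$ in the resulting one-sided $\delta$-neighbourhood of $t$ and every $i$ in the pigeonhole set, the line $\ell_{u,t'}\cap F$ meets $\phi_{I_i}(F)\subseteq Q$, hence lands in a cell $D_i'\in\mathcal{D}_{m^{k+b}}$ that is either $D_i$ or adjacent to $D_i$, and in either case $D_i'\subseteq Q$. Since every cell of $\mathcal{D}_{m^{k+b}}$ has at most nine neighbours (itself included), the map $i\mapsto D_i'$ is at most nine-to-one, which gives $N(\ell_{u,t'}\cap F\cap Q,\mathcal{D}_{m^{k+b}})\geq\lceil N/2\rceil/9\geq(N-1)/18$. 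Consequently $C_1=1/18$ works.

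The main technical obstacle is justifying that $x_i\in\operatorname{int}(Q)$ can be chosen for all but at most one $i$: the $D_i$ for which $\ell_{u,t}\cap F\cap D_i\subseteq\partial Q$ necessarily sit at the at most two points where $\ell_{u,t}$ crosses $\partial Q$, and the pigeonholed direction of perturbation can be taken to push the line away from one of those edges, eliminating one of the two potentially bad cells. The single remaining exceptional cell is absorbed into the $-1$ of the statement, and uniformity of $C_1$ across all admissible $(u,t,k,b,Q)$ is automatic because the nine-neighbour bound is absolute and Lemma~\ref{lemma:projection-condition-1} supplies the interval property uniformly for $|\kappa|\leq c_0$.
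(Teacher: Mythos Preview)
Your argument is correct and in fact coincides almost exactly with an earlier version of the paper's proof (which is commented out in the source): localize at each intersection cell via a deep cylinder, pull back to reduce the slope below $c_0$, use the interval property of $\pi_{-\kappa}(F)$ from Lemma~\ref{lemma:projection-condition-1} to get a one-sided perturbation interval, pigeonhole the direction, and pay a factor of $9$ for adjacency.

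The paper's final proof takes a slightly different route. Rather than passing through arbitrarily deep cylinders $\phi_{I_i}(F)$ chosen around points $x_i$, it works entirely at the fixed generation $k+b$: it shows that $N(\ell_{u,t}\cap F\cap Q,\mathcal{D}_{m^{k+b}})$ is comparable, up to absolute constants, to the number of cylinders $f_I(F)$ with $|I|=k+b$ meeting $\ell_{u,t}\cap Q$, and then observes that for each such $I$ the projection $\pi_{-u}(f_I(F))$ is itself a rescaled copy of $\pi_{-u(n/m)^{k+b}}(F)$, hence an interval by Lemma~\ref{lemma:projection-condition-1} applied directly (no pullback needed). The one-sided $\delta$ is then simply half the common length $|\pi_{-u}(f_I(F))|$, uniform over all $I$. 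The boundary issue is handled by noting that at most two generation-$(k+b)$ cylinders can meet $\ell_{u,t}\cap Q$ without meeting $\operatorname{int}(Q)$.

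The two approaches buy roughly the same thing. Yours is more geometric (zooming in at each intersection point); the paper's is more combinatorial (counting cylinders at a single scale). The paper's version avoids the dependence of $|I_i|$ on the distance $d(x_i,\partial Q)$ and gives a $\delta$ that is manifestly uniform in $i$, which makes the bookkeeping marginally cleaner; your version has the minor advantage that the adjacency-count $9$ is transparent, whereas the paper has to prove two comparability inequalities between cell-counts and cylinder-counts. Your boundary discussion is slightly loose (the ``push the line away from one of those edges'' step is not quite how the pigeonholed direction is determined), but discarding both boundary cells and adjusting $C_1$ accordingly fixes this without difficulty.
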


\begin{proof}
Recall that $D$ is the digit set associated with the carpet $F$.
Note that for any $I\in D^{k+b}$, $0<u\le c_0$, and $t\in \mathbb{R}$, 
\[
N\left(\ell_{u,t}\cap f_{I}(F),\,\mathcal{D}_{m^{k+b}}\right) \le C', \text{ where } C'=C'(c_0).
\]
So, for every $Q\in \mathcal{D}_{m^{k}}$
\begin{equation}\label{eq:proof-Lemma 2 -1}
N(\ell_{u,t} \cap F\cap Q, \quad  \mathcal{D}_{m^{k+b}})\le C'\left|\left\{I\in D^{k+b}:\, \ell_{u,t} \cap f_I(F) \cap Q \neq \emptyset\right\}\right|.
\end{equation}
On the other hand, for every $I\in D^{k+b}$ such that $\ell_{u,t} \cap f_I(F) \cap \text{Int}( Q) \neq \emptyset$ we may  associate a cell $D\in \mathcal{D}_{m^{k+b}}, D\subset Q$ with
$$\ell_{u,t} \cap F\cap \text{Int}( Q) \cap D \neq \emptyset.$$
Indeed, since $f_I(F)\cap \text{Int}( Q)$ is included in a column of such cells, once such choice is the lowest $D$ in this column that intersects $\ell_{u,t} \cap F\cap \text{Int}( Q)$. This map is (at most) $2$-to-$1$, and since $\left| \ell_{u,t} \cap F\cap \partial Q \right| \leq 4$, it follows that
\begin{equation}\label{eq:proof-Lemma 2 - the other ineqaulity}
\left|\left\{I\in D^{k+b}:\, \ell_{u,t} \cap f_I(F) \cap Q \neq \emptyset\right\}\right|\leq 8\cdot \left( N(\ell_{u,t} \cap F\cap Q, \quad  \mathcal{D}_{m^{k+b}})+1 \right).
\end{equation}

In the following, we fix $0<u\le c_0$. For any $I\in D^{k+b}$, 
$$\pi_{-u} (f_I(F)) = \text{ rescaled and translated copy of } \pi_{-u\frac{n^{k+b}}{m^{k+b}}} (F).$$ So, it follows from Lemma \ref{lemma:projection-condition-1}
 that  $\pi_{-u} (f_I(F))$ is an interval.  So, if $f_I(F)\cap \ell_{u,t}\neq \emptyset$,  letting $\delta':=|\pi_{-u} (f_I(F))|/2$, 
$$  f_I(F)\cap \ell_{u,t'}\neq \emptyset  \text { for all } t'\in (t,t+\delta'), \text{ or } f_I(F)\cap \ell_{u,t'}\neq \emptyset \text { for all } t'\in (t-\delta',t).$$
In view of this,  we then deduce that there exists $\delta>0$ depending on $k,b$ and $c_0$,  such that whenever $f_I(F)$ intersects the interior of $Q\in \mathcal{D}_k$ and $\ell_{u,t}\cap f_{I}(F)\cap Q\neq \emptyset$, then 
\begin{equation}\label{eq:proof-Lemma 2 -2}
 \ell_{u,t'}\cap f_{I}(F)\cap Q\neq \emptyset \text { for all }  t'\in (t-\delta,t) \textrm{ or } \ell_{u,t'}\cap f_{I}(F)\cap Q\neq \emptyset \text { for all }  t'\in (t,t+\delta).
\end{equation}
On the other hand,  it is readily checked that for $u\le c_0$ and $t\in\mathbb{R}$,  there exists at most two $I\in D^{k+b}$ such that $f_I(F)$ doesn't intersect the interior of $Q\in \mathcal{D}_k$ and $\ell_{u,t}\cap f_{I}(F)\cap Q\neq \emptyset$.  Thus,  combining \eqref{eq:proof-Lemma 2 -1}, \eqref{eq:proof-Lemma 2 - the other ineqaulity}, and \eqref{eq:proof-Lemma 2 -2}, we obtain the desired conclusion.

\end{proof}

\subsection{Perturbing the slope of  horizontal slices through carpets} \label{Section slope}

Let $0\leq j \leq n-1$ and let $t=t_j\in P_2 (F)=[0,1]$ be the point
\begin{equation*}
t:=\sum_{k=1} ^\infty \frac{j}{n^k}.
\end{equation*} 
It follows from the work of Mackay \cite{mackay2011assouad} that we can select $j$ so that, if $D_j = \lbrace i:\, (i,j) \in D\rbrace$ and
$$ \ell_{0,t}\cap F =\left\lbrace \sum_{k=1} ^\infty \frac{x_k}{m^k}: x_k \in D_j \right\rbrace, $$
we have $\dim_H  \ell_{0,t}\cap F = \dim^* F-1$.

\begin{Claim} \label{Claim hori line}
For every $p\in \mathbb{N}$ there exists $\delta=\delta(p,t)>0$ such that:

For every $u\in [0,\delta)$ there are $t_u$ and $\delta'>0$ (depending  on all previous parameters) such that for every $t\in [t_u-\delta',t_u+\delta']$, 
\begin{equation*}
N(\ell_{u,t}\cap F , \, \mathcal{D}_{m^p} ) \geq \frac{1}{9}   N(\ell_{0,t_j}\cap F , \, \mathcal{D}_{m^p}).
\end{equation*}
\end{Claim}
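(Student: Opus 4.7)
The plan is to show that, even for slightly tilted lines, we can choose an intercept that keeps the line passing through most of the IFS cylinders already witnessing $N(\ell_{0,t_j}\cap F,\mathcal{D}_{m^p})$. The crucial structural observation is that the horizontal slice at $t_j$ decomposes via self-similarity into $|D_j|^p$ cylinders at IFS level $p$ which all share a common vertical base---exactly what enables the simultaneous-intersection argument.

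\textbf{Step 1 (Relevant cylinders and baseline count).} Consider the cylinders $\phi_I([0,1]^2)$ for $I=((i_1,j),\ldots,(i_p,j))$ with $i_k\in D_j$. Each is a rectangle $[a_I,a_I+m^{-p}]\times[b_p,b_p+n^{-p}]$, where the horizontal bases $a_I=\sum_{k=1}^p i_k/m^k$ are distinct and the vertical base $b_p=\sum_{k=1}^p j/n^k$ is the same for all $I$. Each such cylinder occupies exactly one column of $\mathcal{D}_{m^p}$, and since $t_j=j/(n-1)\in(b_p,b_p+n^{-p})$ one directly computes $N(\ell_{0,t_j}\cap F,\mathcal{D}_{m^p})=|D_j|^p$.

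\textbf{Step 2 (Pullback and the intercept condition).} Pulling $\ell_{u,t_u}$ back by $\phi_I^{-1}$ gives a line of slope $un^p/m^p$ and intercept $\tau_I := n^p(t_u-b_p)+un^p a_I$. This pulled-back line meets $F$ in $[0,1]^2$ (equivalently, $\ell_{u,t_u}$ meets $\phi_I(F)$) iff $\tau_I\in\pi_{-un^p/m^p}(F)$. Rewriting the constraint on $t_u$ gives
\[
t_u\in J_I(u):=b_p+\tfrac{1}{n^p}\,\pi_{-un^p/m^p}(F)-ua_I.
\]
By Lemma \ref{lemma:projection-condition-1}, for $u\le c_0$ the set $\pi_{-un^p/m^p}(F)$ is an interval of length $1-o(1)$ as $u\to 0$, so each $J_I(u)$ is an interval of length $\approx 1/n^p$.

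\textbf{Step 3 (Simultaneous intersection and choice of $t_u,\delta'$).} The intervals $\{J_I(u)\}_I$ differ only by the shifts $-ua_I$, and $\max_I a_I-\min_I a_I\le 1$, so their common intersection $\bigcap_I J_I(u)$ has length at least $(1-o(1))/n^p-u$. Set $\delta:=\min\!\left(c_0,\, 1/(2n^p)\right)$, shrunk further if needed so that the $o(1)$ error is absorbed. For each $u\in[0,\delta)$ pick $t_u$ at the center of $\bigcap_I J_I(u)$ and $\delta':=1/(6n^p)$. Then for every $t\in[t_u-\delta',t_u+\delta']$ and every $I$, the line $\ell_{u,t}$ meets $\phi_I(F)$, so it meets $F$ in the column of $\mathcal{D}_{m^p}$ containing $\phi_I([0,1]^2)$. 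Since the $|D_j|^p$ cylinders lie in distinct columns, $N(\ell_{u,t}\cap F,\mathcal{D}_{m^p})\ge |D_j|^p = N(\ell_{0,t_j}\cap F,\mathcal{D}_{m^p})$, which is stronger than the claimed $1/9$ bound.

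\textbf{Main obstacle.} The crux is making sure the $|D_j|^p$ intervals $J_I(u)$ overlap. This works because all the relevant cylinders share the common vertical base $b_p$ (the vertical digit is always the fixed $j$), so the intercept conditions differ only by the linear $O(u)$ horizontal shifts $ua_I$. Lemma \ref{lemma:projection-condition-1} supplies the essential fact that each $J_I(u)$ has length close to $1/n^p$, which dominates the $O(u)$ variation as long as $u\ll 1/n^p$; this is precisely what forces the $p$-dependence of $\delta$.
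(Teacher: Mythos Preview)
Your proof is correct and follows essentially the same strategy as the paper's: both exploit that the relevant cylinders along the horizontal slice $\ell_{0,t_j}$ all share a common vertical base (the digit $j$ repeated), then use Lemma~\ref{lemma:projection-condition-1} to show that the corresponding projected intervals $\pi_{-u'}(\,\cdot\,)$ overlap for small $u$, yielding a common intercept interval.

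The one genuine difference is the choice of cylinder generation. You work at level $p$, so each cylinder $\phi_I([0,1]^2)$ occupies exactly one column of $\mathcal{D}_{m^p}$; once $\ell_{u,t}$ meets $\phi_I(F)$ you immediately get a distinct $\mathcal{D}_{m^p}$-cell, and the full count $|D_j|^p$ follows without any loss. The paper instead picks a deeper level $k$ with $1/n^k<1/m^p$, so that each cylinder $\phi_{I_D}(F)$ is small enough to sit (essentially) inside a single $\mathcal{D}_{m^p}$-cell $D$; but then the intersection point of the tilted line with $\phi_{I_D}(F)$ may fall in a cell adjacent to $D$ rather than $D$ itself, which is where the factor $1/9$ comes from. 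Your version is slightly cleaner and gives the stronger inequality (no $1/9$), at no extra cost.
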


\begin{proof}
Let $(x,t)\in  F\cap \ell_{0,t}\neq \emptyset$ and let $D\in \mathcal{D}_{m^p}$ be such that $(x,t) \in D$. Let $k\in \mathbb{N}$ be such that $\frac{1}{n^k} < \frac{1}{m^p}$. Find a cylinder $(x,t)\in \phi_{I_D} (F)$ where $|I_D|=k$ such  that $\phi_{I_D}$ corresponds to the first $k$ digits in an expansion of $x$ in base $m$, and to the first $k$ digits in the expansion of $t$ in base $n$.   Then $ \phi_{I_D} (F)\cap D \neq \emptyset$.

Let $|\kappa|\leq 1$  so that $|\frac{n^k}{m^k}\cdot \kappa|<1$. Recall that we parametrize the projections $\mathbb{R}^2 \rightarrow \mathbb{R}$ by $\pi_\kappa (x,y)=y+\kappa\cdot x$. Then  $\pi_\kappa ( \phi_{I_D} (F))$ is a translation of the set
\begin{equation*}
\pi_\kappa \left( \diag \left(\frac{1}{m^k},\frac{1}{n^k}\right) \cdot F\right)=\frac{1}{n^k}\pi_{\kappa\frac{n^k}{m^k}}(F). 
\end{equation*}
Namely, by the definition of the cylinder $I_D$ and of $\pi_\kappa$,
$$\pi_\kappa ( \phi_{I_D} (F)) = \frac{1}{n^k}\pi_{\kappa\frac{n^k}{m^k}}(F)+ \sum_{i=1} ^{k} \frac{j}{n^i}+\kappa\cdot P_1\phi_{I_D}(0).$$
Recall that by Lemma \ref{lemma:projection-condition-1},  $\pi_{\kappa\frac{n^k}{m^k}}(F)$ is an interval when $\kappa\frac{n^k}{m^k}$ is small enough.

We claim that there is some small $\delta=\delta(p,t)>0$ such that for every $\kappa\in (-\delta,0)$:
\begin{equation*}
\bigcap_{D\in \mathcal{D}_{m^p}:\, D\cap\ell_{0,t}\neq \emptyset} \pi_\kappa \phi_{I_D} (F)  \text{ contains a non trivial open interval.}
\end{equation*}
Indeed, let $J:= \frac{1}{n^k}\pi_{\kappa\frac{n^k}{m^k}}(F)+ \sum_{i=1} ^{k} \frac{j}{n^i}$, and let $t_{D}:=P_1\phi_{I_D}(0)$. Then
$$ \bigcap_{D\in \mathcal{D}_{m^p}:\, D\cap\ell_{0,t}\neq \emptyset} \pi_\kappa \phi_{I_D} (F) = \bigcap_{D\in \mathcal{D}_{m^p}:\, D\cap\ell_{0,t}\neq \emptyset} \left( J+ \kappa\cdot t_D \right).$$
Since $k$ is fixed and $|J|=\frac{1}{n^k}(1-o(1))$ by Lemma \ref{lemma:projection-condition-1}, the Claim follows since we may take $\kappa \ll \frac{1}{n^k}$, and since $0\leq t_D \leq 1$.

Finally, if 
$$t' \in \bigcap_{D\in \mathcal{D}_{m^p}:\, D\cap\ell_{0,t}\neq \emptyset} \pi_\kappa \phi_{I_D} (F)$$
then for every $D\in \mathcal{D}_{m^p}$ such that $D\cap\ell_{0,t}\neq \emptyset$,  $\ell_{-\kappa, t'}$ intersects $\phi_{I_D} (F)$. Therefore, $\ell_{-\kappa, t'}\cap F$ intersects a cell that is adjacent to $D$ in $\mathcal{D}_{m^p}$.   This proves the Claim since there are at most $9$ such cells for every $D$.
\end{proof}

\subsection{Proof of Theorem \ref{Main Theorem}} 

Let $t\in P_2(F)$ be as in the beginning of Section \ref{Section slope} 
$$t= \sum_{k=1} ^\infty \frac{j}{n^k},$$
and consider the line $\ell_{0,t}$, that intersects $F$ with Hausdorff dimension $\dim^* F -1$. The following Claim is the key construction of this paper:
\begin{Claim} \label{Key claim}
For every $i\in \mathbb{N}$  there are  integers $k_i\geq 0$, cells $D_i \in \mathcal{D}_{m^{k_i}}$ and lines $\ell_{u,t_i}$ with the following properties:
\begin{itemize}
\item All the lines $\ell_{u,t_i}$ have the same slope $u \neq 0$. Every small enough $u$ may be chosen to be this common slope.

\item There exists some global $C'=C'(u)>0$ such that for every $i$,
\begin{equation*}
N\left(\ell_{u,t_i} \cap F \cap D_i,\, \, \mathcal{D}_{m^{k_i+i}}\right)\geq m^{(\dim^* F -1)\cdot i} \cdot C'.
\end{equation*}

\item For every $p\geq 2$ and every $1\leq i <p$, $t_p$  lies in the \textit{interior} of a "good" one sided neighbourhood of $t_i$ in the sense of Claim \ref{Lemma 2}. Therefore, by Claim \ref{Lemma 2},
\begin{equation*}
N(\ell_{u,t_{p}}\cap F \cap D_i,\,  \, \mathcal{D}_{m^{k_i+i}}) \geq \frac{1}{C_1} \cdot \left(  N(\ell_{u,t_i}\cap F \cap D_i,\, \, \mathcal{D}_{m^{k_i+i}}) - 1 \right)  
\end{equation*}
for some absolute constant $C_1=C_1(F)$ only depending on $F$, as in Claim \ref{Lemma 2}.
\end{itemize}
\end{Claim}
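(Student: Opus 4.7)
My plan is to construct the triples $(t_i, k_i, D_i)$ by induction on $i \geq 1$, with a common slope $u > 0$ fixed at the outset and taken small enough that $u < c_0$ (so Claim \ref{Lemma 2} applies) and $u < \delta(1, t_j)$ (so Claim \ref{Claim hori line} applies directly at resolution $p = 1$). For the base case I apply Claim \ref{Claim hori line} at $p = 1$ to obtain an intercept $t_1$ satisfying $N(\ell_{u, t_1} \cap F, \mathcal{D}_m) \geq \tfrac{1}{9} m^{\dim^* F - 1}$, and set $k_1 := 0$, $D_1 := [0, 1]^2$. The final constant of the claim will be $C' := 1/18$, with the factor of $1/2$ anticipating a boundary loss in the inductive step.

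For the inductive step $p \to p + 1$, suppose the triples have been built for $i \leq p$. Apply Claim \ref{Lemma 2} to each $(t_i, D_i)$ with resolution index $b = i$, obtaining a one-sided ``good'' $\delta_i$-neighborhood $J_i$ of $t_i$ (whose direction is dictated by the geometry). By the inductive hypothesis $t_p$ lies in the interior of every $J_i$ for $i < p$, so a sufficiently small open two-sided neighborhood of $t_p$ is contained in $\bigcap_{i < p} \mathrm{Int}(J_i)$; intersecting this with the interior of the good half of $J_p$ yields an open one-sided neighborhood $N_{p+1}$ of $t_p$ such that any $t \in N_{p+1}$ lies in $\mathrm{Int}(J_i)$ for all $i \leq p$. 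Thus choosing $t_{p+1} \in N_{p+1}$ will automatically satisfy property 3 of the claim.

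The crux is to locate $t_{p+1} \in N_{p+1}$ producing a fresh factor $m^{\dim^* F - 1}$ in the covering count via self-affinity. I pick a large integer $k$ and a word $I \in D^k$ so that (i) the pulled-back slope $\tilde u := u (n/m)^k$ satisfies $\tilde u < \delta(p + 1, t_j)$, and (ii) $\pi_{-u}(\phi_I(0))$ lies within a tiny target sub-interval of $N_{p+1}$. Condition (ii) is achievable for $k$ large because the points $\{\pi_{-u}(\phi_I(0)) : I \in D^k\}$ form an $O(n^{-k})$-dense subset of the interval $\pi_{-u}(F)$, which contains $t_p$ in its interior; this is where Lemma \ref{lemma:projection-condition-1} enters crucially. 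Now applying Claim \ref{Claim hori line} in $F$-coordinates at slope $\tilde u$ and resolution $p + 1$ yields an intercept $s$ with $N(\ell_{\tilde u, s} \cap F, \mathcal{D}_{m^{p+1}}) \geq \tfrac{1}{9} m^{(\dim^* F - 1)(p+1)}$. Pushing $\ell_{\tilde u, s}$ forward via $\phi_I$ gives a world line of slope $u$ with intercept $t^*(s) = n^{-k} s + \pi_{-u}(\phi_I(0)) \in N_{p+1}$; I set $t_{p+1} := t^*(s)$, $k_{p+1} := k$, and let $D_{p+1}$ be the $\mathcal{D}_{m^k}$-cell containing the larger half of the image slice $\phi_I(\ell_{\tilde u, s} \cap F)$ (whose $y$-extent is $\leq u m^{-k} < m^{-k}$, so it straddles at most two $\mathcal{D}_{m^k}$-cells).

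For the covering count lower bound, $\phi_I$ sends a $\mathcal{D}_{m^{p+1}}$ $F$-cell indexed by $(i, j)$ to a rectangle of size $m^{-(k+p+1)} \times m^{-(k\alpha + p + 1)}$ in the world (with $\alpha := \log n / \log m$). Distinct index pairs $(i, j)$ yield rectangles at distinct world $x$-indices and at $y$-positions spaced $m^{k(1-\alpha)} \geq 1$ world cells apart, hence in disjoint world cells, so
\[ N\bigl(\phi_I(\ell_{\tilde u, s} \cap F), \mathcal{D}_{m^{k+p+1}}\bigr) \geq N\bigl(\ell_{\tilde u, s} \cap F, \mathcal{D}_{m^{p+1}}\bigr) \geq \tfrac{1}{9} m^{(\dim^* F - 1)(p+1)}, \]
and restricting to $D_{p+1}$ costs at most a factor of $2$, establishing property 2 with $C' = 1/18$. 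The main obstacle is the compound lower bound on $k$: it must be large enough to push $\tilde u$ below $\delta(p + 1, t_j)$, to render the cylinder-corner projections dense enough to land $t_{p+1}$ in the shrinking neighborhood $N_{p+1}$, and to ensure the injectivity factor $m^{k(1-\alpha)} \geq 1$. Since all three are lower bounds, they are mutually compatible; one simply takes $k$ sufficiently large depending on $p$ and the neighborhood parameter $\eta$.
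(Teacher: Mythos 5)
Your proposal follows essentially the same inductive scheme as the paper: fix a small slope $u < c_0$ once and for all, start from Claim~\ref{Claim hori line} at resolution $1$, and at each step use self-affinity to pull Claim~\ref{Claim hori line} (at the next resolution, with the rescaled slope $u(n/m)^k$) into a small cylinder, getting a new intercept that sits inside the good neighborhoods supplied by Claim~\ref{Lemma 2}. The overall architecture, the role of $c_0$ from Lemma~\ref{lemma:projection-condition-1}, and the interplay of the two claims match the paper.

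There is, however, a genuine gap in the way you land the new intercept. You locate $t_{p+1}$ by finding a word $I\in D^k$ whose corner projection $\pi_{-u}(\phi_I(0))$ falls in a target sub-interval of $N_{p+1}$, and you justify the availability of such an $I$ by the $O(n^{-k})$-density of the corner projections in the interval $\pi_{-u}(F)$, together with the assertion that $\pi_{-u}(F)$ \emph{contains $t_p$ in its interior}. That assertion is the load-bearing step and it is not established anywhere: Lemma~\ref{lemma:projection-condition-1}, to which you attribute it, only says that $\pi_{-u}(F)$ is an interval, not that a particular intercept $t_p$ avoids its endpoints. If $t_p$ happened to be an endpoint (or if the good one-sided neighborhood $N_{p+1}$ points out of $\pi_{-u}(F)$), the target would not meet $\pi_{-u}(F)$ and the density argument would produce nothing. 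You could try to rule this out using the covering-number lower bound at stage $p$, but this fails for small $p$ when $C' m^{(\dim^* F - 1)p}$ is small. The paper avoids the issue by anchoring at an actual point of $F$: it chooses $s$ in the (interior of the) good neighborhoods with $\ell_{u,s}\cap F\cap D_{p-1}\neq\emptyset$ (granted by Claim~\ref{Lemma 2}), picks $z'\in\ell_{u,s}\cap F\cap D_{p-1}$, and takes cylinders $\phi_I$ with $z'\in\phi_I(F)$. Since $\pi_{-u}(z')=s$ and $\pi_{-u}(\phi_I(F))\to\{s\}$ as $|I|\to\infty$, the pushed-forward intercept is automatically within $O(n^{-k})$ of $s$, hence in all the required good one-sided neighborhoods for $k$ large. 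This anchoring step is what your proposal is missing.

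A secondary remark: your treatment of the covering count---pushing $F$-cells forward by $\phi_I$ and observing that, since $m>n$, distinct $\mathcal D_{m^{p+1}}$ $F$-cells map into distinct $\mathcal D_{m^{k+p+1}}$ world cells, then losing only an $O(1)$ factor when restricting to a single $\mathcal D_{m^k}$-cell---is cleaner than the paper's chain of inequalities through the mixed partitions $\mathcal{D}_{m^{k_p+p}}\times\mathcal{D}_{n^{k_p}}$ with the $s(u)$ constants, and reaches a comparable uniform constant. (Your ``at most two $\mathcal D_{m^k}$-cells'' should really be ``at most $O(1)$'' once boundary columns are accounted for, but that does not affect the argument.)
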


The construction is inductive:
$$ $$

\noindent{ \textbf{The initial step.}} Recall that
\begin{equation*}
N(\ell_{0,t} \cap F, \, \mathcal{D}_{m})  =m^{(\dim^* F -1)\cdot 1}.
\end{equation*}
By Claim \ref{Claim hori line} there exists $\delta>0$ such that for every slope $u\in [0,\delta)$ there  are $t_u$ and $\delta'>0$ such that  for every $t\in [t_u-\delta' ,t_u+\delta']$ , we have
$$N( F\cap \ell_{u,t},\, \mathcal{D}_{m}) \geq \frac{1}{9}\cdot  m^{(\dim^* F -1)\cdot 1}.$$
Pick a pair $(u,t)$ where both $u$ and $t$ are \textit{interior} points in these corresponding neighbourhoods, and $u<c_0$ so that Claim \ref{Lemma 2} applies. This will be $\ell_{u,t_1}$. The slope $u$ will be the slope of all the lines that we construct later on. We also select $D_1 = [0,1]$ and $k_1 = 0$.

$$ $$
\noindent{ \textbf{Inductive step: Construction of the line $\ell_{u,t_p}.$}} Assume we have found  $p$ integers $k_1,...,k_{p-1}$ and $p-1$ lines $\ell_{u,t_1},...,\ell_{u,t_{p-1}}$ such that for every $1\leq i < p$ there is a cell $D_i \in \mathcal{D}_{m^{k_i}}$ such that for some global $C'>0$ (that we will discuss later), the properties in the Claim hold true. In particular, the successive distances between  $\ell_{t_i}$ and $t_{i+1}$ go to zero very fast.

Apply Claim \ref{Claim hori line} for $p$ to find $\delta>0$ so that for any $u'\in [0, \delta)$ there is are $t_{u'}$ and $\delta'>0$ such that  for every $t \in [t_{u'}-\delta',t_{u'}+\delta']$  we have 
\begin{equation} \label{Eq number}
N( F\cap \ell_{u',t},\, \mathcal{D}_{m^{p}}) \geq \frac{1}{9}\cdot  m^{(\dim^* F -1 )\cdot p}.
\end{equation}

Let $D_{p-1} \in \mathcal{D}_{m^{k_{p-1}}}$ be the cell we have previously constructed.  By our assumption, $t_{p-1}$ lies in the \textit{interior} of a "good" one sided neighbourhood of $t_i$ for every $1\leq i \leq p-1$, in the sense of Claim \ref{Lemma 2}. Suppose without the loss of generality that a "good" neighbourhood of $t_{p-1}$ (good in the sense of Claim \ref{Lemma 2}) has the form $[t_{p-1},t_{p-1}+\delta'']$.   Then, by  Claim \ref{Lemma 2}, for every $s\in (t_{p-1},t_{p-1}+\delta'')$ we have $\ell_{u,s}\cap F \cap D_{p-1} \neq \emptyset$.  Select $s$ to  be sufficiently close to $t_{p-1}$ so that for every $1\leq i <p$, $s$ \textit{remains} in a good one sided neighbourhood of $t_{i}$ in the sense of Claim \ref{Lemma 2}.

Let $z'\in \ell_{u,s}\cap F \cap D_{p-1}$. Consider all the cylinder maps $\phi_I$ such that:
\begin{enumerate}
\item $z' \in \phi_I (F)$.

\item  $|I|=k$ satisfies that $\frac{n^k}{m^k}\cdot u < \delta$.

\end{enumerate}
Let $\ell_{\frac{n^k}{m^k}\cdot u,t}$ be any  line satisfying \eqref{Eq number}.  Then for any $I$ the slope of $\phi_I (\ell_{\frac{n^k}{m^k}\cdot u,t})$  is $u$. Since the line $\phi_I (\ell_{\frac{n^k}{m^k}\cdot u,t})$ passes through a point that can be made arbitrarily close to $z'$ (by taking the generation of $I$ to be even larger), its intercept can be made to be arbitrary close to the intercept of the line of slope $u$ through $z'$. This line is exactly $\ell_{u,s}$, and we have chosen its intercept $s$ so that $s\in (t_{p-1},t_{p-1}+\delta'')$. Thus, we choose $t_p$ to be the intercept of $\phi_I (\ell_{\frac{n^k}{m^k}\cdot u,t})$, where $I$ is chosen so that the intercept of $\phi_I (\ell_{\frac{n^k}{m^k}\cdot u,t})$ is in a good one sided neighbourhood of  $t_{p-1}$. Furthermore, by making the distance between them even smaller, it is in good neighbourhoods of all the preceding $t_i$'s. Let $k_p:=k$ be the generation of this cylinder $I$.

$$ $$
\noindent{ \textbf{Construction of the cell $D_p$}}   We claim that there is a constant $C=C(u)>0$ (the same constant that works for the previous lines) such that for some $D\in \mathcal{D}_{m^{k_p}}$ 
\begin{equation*}
N\left(\ell_{u, t_p}\cap F \cap D, \, \mathcal{D}_{m^{k_{p}+p}}\right)\geq m^{(\dim^* F -1)\cdot p} \cdot C.
\end{equation*} 
Indeed, Let $\tilde{D} = \phi_I ([0,1]^2)$ be the corresponding element of the partition $\mathcal{D}_{m^{k_p}}\times \mathcal{D}_{n^{k_p}}$. 
For every $u\neq 0$ we define
$$s(u) = \max_{p\in \mathbb{N},\, t\in \mathbb{R}, D'\in \mathcal{D}_{m^p} } \left| \left\lbrace D\in \mathcal{D}_{m^p}: \,D\cap \ell_{u,t} \neq \emptyset \text{ and } P_1 \left( D \right) = P_1 \left( D' \right) \right\rbrace \right|.$$
That is, $s(u)$ is the maximal possible number of cells in the same column of $\mathcal{D}_{m^p}$ that any line with slope $u$ can intersect (across all $p$).  
Then, since $\phi_I$ is invertible,
\begin{eqnarray*}
N(\phi_I^{-1} (\ell_{u,t_p}) \cap F\cap [0,1]^2,\, \mathcal{D}_{m^p} ^2) &\leq &  s(u)\cdot  N( \phi_I^{-1} (\ell_{u,t_p}) \cap F \cap [0,1]^2,\quad \mathcal{D}_{m^p} \times [0,1]) \\
&=&s(u)\cdot  N(\ell_{u,t_p} \cap \phi_I(F) \cap \tilde{D}, \quad \mathcal{D}_{m^{k_p+p}} \times \mathcal{D}_{n^{k_p}})\\
&\leq & s(u)\cdot  N(\ell_{u,t_p} \cap F \cap \tilde{D}, \quad \mathcal{D}_{m^{k_p+p}} \times \mathcal{D}_{n^{k_p}}) \\
&\leq & 2\cdot s(u)\cdot  N(\ell_{u,t_p} \cap F \cap \tilde{D}, \quad \mathcal{D}_{m^{k_p+p}} ^2 ).
\end{eqnarray*}
For the last inequality, observe that any cell in the partition $\mathcal{D}_{m^{k_p+p}} ^2$  can intersect at most two cells in the partition $\mathcal{D}_{m^{k_p+p}} \times \mathcal{D}_{n^{k_p}}$.

Finally, let us partition $\tilde{D} \in \mathcal{D}_{m^{k_p}} \times \mathcal{D}_{n^{k_p}}$ into  $\mathcal{D}_{m^{k_p}} ^2$ cells (which are squares of side $m^{-k_p}$ that sit  one above the other). Then $\ell_{u, t_p}\cap F$ can only intersect at most $s(u)$ such cells. Therefore, there is at least one cell $D\in \mathcal{D}_{m^{k_p}}$ such that
\begin{equation*}
N(\ell_{u,t_p} \cap F \cap \tilde{D}, \, \mathcal{D}_{m^{k_p+p}} ^2) \leq s(u)\cdot  N(\ell_p \cap F \cap D, \,\mathcal{D}_{m^{k_p+p}} ^2).
\end{equation*}
We pick this cell as our $D_p$. Recalling \eqref{Eq number}, the constant $C$ is thus $s(u)^{-2}\cdot \frac{1}{2\cdot 9}$. The proof of the Claim is complete. \hfill{$\Box$}
$$ $$

\noindent{ \textbf{Proof of Theorem \ref{Main Theorem}}} With the notations of Claim \ref{Key claim},  consider the sequence of compact sets $X_p:=\ell_{u,t_p} \cap F$. Then, upon moving to a subsequence, the sequence $X_p$ has a Hausdorff metric limit $A$. It is easy to see that $A$ is a subset of $\ell_{u,t}\cap F$ for some $t$. Recall that $D_i \in \mathcal{D}_{m^{k_i}}$ are the cells we have constructed in Claim \ref{Key claim}.

Fix $i\in \mathbb{N}$. Then, up to taking a further subsequence,  $\ell_{u,t_p}\cap F \cap \overline{D_i}$ converges to $A_i$, where 
$$A_i \subseteq A\cap \overline{D_i}\subseteq \ell_{u,t} \cap F \cap \overline{D_i}, \text{ as  } p\rightarrow \infty.$$
Thus,  for every $p$ large enough in our subsequence, by Lemma \ref{Lemma 0} and our construction,
\begin{eqnarray*}
 N(\ell_{u,t}\cap F \cap \overline{D_i},\quad \mathcal{D}_{m^{k_i+i}}) &\geq& N(A_i,\quad \mathcal{D}_{m^{k_i+i}})\\
 &\geq& \frac{1}{9}\cdot  N(\ell_{u,t_p}\cap F \cap \overline{D_i},\quad \mathcal{D}_{m^{k_i+i}})\\
 &\geq& \frac{1}{9}\cdot N(\ell_{u,t_p}\cap F \cap D_i,\quad \mathcal{D}_{m^{k_i+i}})\\
 &\geq& \frac{1}{9}\cdot  \frac{1}{C_1} \cdot \left( N(\ell_{u,t_i}\cap F \cap D_i,\quad \mathcal{D}_{m^{k_i+i}})-1\right)\\
 &\geq& \frac{1}{9}\cdot \frac{1}{C_1} \cdot \frac{1}{C'}\cdot \left(  m^{(\dim^* F -1)\cdot i}-1)\right)
\end{eqnarray*}
Also, since $\ell_{u,t}$ is a line with slope $0<|u|<\infty$, there is a global constant $C=C(u)$ such that for every large enough $i$,
\begin{equation*}
 N(\ell_{u,t}\cap F \cap D_i,\quad \mathcal{D}_{m^{k_i+i}})  \geq C\cdot \left(  N(\ell_{u,t}\cap F \cap \overline{D_i},\quad \mathcal{D}_{m^{k_i+i}}) - 1\right).
\end{equation*}
The explanation is similar to the arguments  given in  the proof of Claim \ref{Lemma 2}.

We conclude that there are uniform constants $C_3,C_4>0$ such that for every $i$ there exists $k_i$ and $D_i \in \mathcal{D}_{m^{k_i}}$ with
$$N(\ell_{u,t}\cap F \cap D_i,\quad \mathcal{D}_{m^{k_i+i}})\geq C_3\cdot \left(  m^{(\dim^* F -1)\cdot i}\right)-C_4.$$
Putting this into Theorem \ref{Furstneberg Formula}, we see that
\begin{equation*}
\dim^* \ell_{u,t} \cap F = \lim_{i\rightarrow \infty} \max_{k} \lbrace \frac{\log N(\ell_{u,t} \cap F \cap D, \quad  \mathcal{D}_{m^{k+i}})}{i\log m} : D\in \mathcal{D}_{m^k} \rbrace \geq \dim^* F - 1.
\end{equation*}
Via Proposition \ref{Prop upper bound}, this completes the proof of Theorem \ref{Main Theorem} for small slopes $u>0$ in the sense of Claim \ref{lemma:projection-condition-1}. Arguing in a completely analogues manner for negative $u$'s, there is a left one sided open neighbourhood of $0$ of slopes such that the Theorem holds. Thus, there is an open ball of such slopes, concluding the proof.  \hfill{$\Box$}

\bibliography{bib}{}

\begin{thebibliography}{10}

\bibitem{algom2018slicing}
Amir Algom.
\newblock Slicing theorems and rigidity phenomena for self-affine carpets.
\newblock {\em Proc. Lond. Math. Soc. (3)}, 121(2):312--353, 2020.

\bibitem{algom2016self}
Amir Algom and Michael Hochman.
\newblock Self-embeddings of {B}edford-{M}c{M}ullen carpets.
\newblock {\em Ergodic Theory Dynam. Systems}, 39(3):577--603, 2019.

\bibitem{Algom2021Wu}
Amir Algom and Meng Wu.
\newblock {Improved Versions of Some Furstenberg Type Slicing Theorems for
  Self-Affine Carpets}.
\newblock {\em International Mathematics Research Notices}, 11 2021.
\newblock rnab318.

\bibitem{austin2020new}
Tim Austin.
\newblock A new dynamical proof of the {S}hmerkin-{W}u theorem.
\newblock {\em J. Mod. Dyn.}, 18:1--11, 2022.

\bibitem{Barany2021finer}
Bal\'{a}zs B\'{a}r\'{a}ny, Antti K\"{a}enm\"{a}ki, and Han Yu.
\newblock Finer geometry of planar self-affine sets.
\newblock {\em arXiv preprint arXiv:2107.00983}, 2021.

\bibitem{bishop2013fractal}
Christopher~J. Bishop and Yuval Peres.
\newblock {\em Fractals in probability and analysis}, volume 162.
\newblock Cambridge University Press, 2016.

\bibitem{feng2014affine}
De-Jun Feng, Wen Huang, and Hui Rao.
\newblock Affine embeddings and intersections of {C}antor sets.
\newblock {\em J. Math. Pures Appl. (9)}, 102(6):1062--1079, 2014.

\bibitem{fraser2020book}
Jonathan~M. Fraser.
\newblock {\em Assouad Dimension and Fractal Geometry}.
\newblock Cambridge Tracts in Mathematics. Cambridge University Press, 2020.

\bibitem{fraser2020fractal}
Jonathan~M. Fraser.
\newblock Fractal geometry of {B}edford-{M}c{M}ullen carpets.
\newblock In {\em Thermodynamic formalism}, volume 2290 of {\em Lecture Notes
  in Math.}, pages 495--516. Springer, Cham, [2021] \copyright 2021.

\bibitem{furstenberg1970intersections}
Harry Furstenberg.
\newblock Intersections of {C}antor sets and transversality of semigroups.
\newblock In {\em Problems in analysis ({S}ympos. {S}alomon {B}ochner,
  {P}rinceton {U}niv., {P}rinceton, {N}.{J}., 1969)}, pages 41--59. Princeton
  Univ. Press, Princeton, N.J., 1970.

\bibitem{furstenberg2008ergodic}
Hillel Furstenberg.
\newblock Ergodic fractal measures and dimension conservation.
\newblock {\em Ergodic Theory and Dynamical Systems}, 28(02):405--422, 2008.

\bibitem{kaenmaki2016rigid}
Antti K\"{a}enm\"{a}ki, Tuomo Ojala, and Eino Rossi.
\newblock Rigidity of quasisymmetric mappings on self-affine carpets.
\newblock {\em Int. Math. Res. Not. IMRN}, (12):3769--3799, 2018.

\bibitem{mackay2011assouad}
John Mackay.
\newblock Assouad dimension of self-affine carpets.
\newblock {\em Conformal Geometry and Dynamics of the American Mathematical
  Society}, 15(12):177--187, 2011.

\bibitem{Mattila2015Fourier}
Pertti Mattila.
\newblock {\em Fourier analysis and {H}ausdorff dimension}, volume 150 of {\em
  Cambridge Studies in Advanced Mathematics}.
\newblock Cambridge University Press, Cambridge, 2015.

\bibitem{shmerkin2016furstenberg}
Pablo Shmerkin.
\newblock On {F}urstenberg's intersection conjecture, self-similar measures,
  and the {$L^q$} norms of convolutions.
\newblock {\em Ann. of Math. (2)}, 189(2):319--391, 2019.

\bibitem{Wolff1999Kakeya}
Thomas Wolff.
\newblock Recent work connected with the {K}akeya problem.
\newblock In {\em Prospects in mathematics ({P}rinceton, {NJ}, 1996)}, pages
  129--162. Amer. Math. Soc., Providence, RI, 1999.

\bibitem{wu2016proof}
Meng Wu.
\newblock A proof of {F}urstenberg's conjecture on the intersections of
  {$\times p$}- and {$\times q$}-invariant sets.
\newblock {\em Ann. of Math. (2)}, 189(3):707--751, 2019.

\bibitem{Yu2021Wu}
Han Yu.
\newblock An improvement on {F}urstenberg's intersection problem.
\newblock {\em Trans. Amer. Math. Soc.}, 374(9):6583--6610, 2021.

\end{thebibliography}
\bibliographystyle{plain}

\end{document}